\newcommand{\mbz}{\mathbb{Z}}
\newcommand{\mbq}{\mathbb{Q}}
\newcommand{\mba}{\mathbb{A}}
\newcommand{\mbp}{\mathbb{P}}
\renewcommand{\:}{\colon}
\newcommand{\ra}{\rightarrow}
\newcommand{\iso}{\cong}
\newcommand{\wtilde}{\widetilde}
\DeclareMathOperator{\Pic}{Pic}
\DeclareMathOperator{\Aut}{Aut}
\DeclareMathOperator{\Hom}{Hom}
\DeclareMathOperator{\Gal}{Gal}
\DeclareMathOperator{\im}{Im}
\DeclareMathOperator{\Stab}{Stab}
\DeclareMathOperator{\Jac}{Jac}
\renewcommand{\Im}{\im}
\DeclareMathOperator{\Spec}{Spec}
\DeclareMathOperator{\id}{id}
\newcommand*{\longhookrightarrow}{\ensuremath{\lhook\joinrel\relbar\joinrel\rightarrow}}
\newcommand*{\inj}{\longhookrightarrow}
\newcommand{\xym}[1]{\xymatrix{#1}}
\newcommand{\mapdef}[5]{
	\begin{tabu}{cccc}
		#1\: & #2 & \ra & #3 \\
		& #4 & \mapsto & #5
	\end{tabu}
}
\newcommand{\floor}[1]{\left \lfloor #1 \right \rfloor}
\newcommand{\gen}[1]{\langle #1 \rangle}
\newcommand{\mcL}{\mathcal{L}}
\newcommand{\sep}{\mathrm{sep}}
\mathchardef\mhyphen="2D
\DeclareMathOperator{\Sym}{Sym}
\newcommand{\timestwo}{{\times 2}}
\newcommand{\arhook}{\ar@{^{(}->}}
\DeclareMathOperator{\GL}{GL}
\newtheorem{theorem}{Theorem}[section]
\newtheorem{lemma}[theorem]{Lemma}
\newtheorem{proposition}[theorem]{Proposition}
\newtheorem*{theorem*}{Theorem}
\newtheorem{corollary}[theorem]{Corollary}
\newtheorem*{rep@theorem}{\rep@title}
\newcommand{\newreptheorem}[2]{%
	\newenvironment{rep#1}[1]{%
		\def\rep@title{#2 \ref{##1}}%
		\begin{rep@theorem}}%
		{\end{rep@theorem}}}
\theoremstyle{definition}
\newtheorem{definition}[theorem]{Definition}
\newtheorem{definition-theorem}[theorem]{Definition-Theorem}
\newtheorem{definition-corollary}[theorem]{Definition-Corollary}
\newtheorem{example}[theorem]{Example}
\numberwithin{theorem}{subsection}
\newtheorem{construction}[theorem]{Construction}
\newcommand{\defi}[1]{\emph{#1}}
\renewcommand{\H}{\operatorname{H}}
\newcommand{\dual}{\widehat}
\DeclareMathOperator{\Proj}{Proj}
\DeclareMathOperator{\gencover}{gen}
\newcommand{\WD}{W_{D_8}}
\DeclareMathOperator{\Br}{Br}
\title{Sixteen points in $\mbp^4$ and the inverse Galois problem for del Pezzo surfaces of degree one}
\author{Avinash Kulkarni}
\address{Department of Mathematics, Dartmouth College}
\email{avinash.a.kulkarni@dartmouth.edu}
\subjclass[2010]{14J20 (primary), 12F12, 14F22 (secondary)}
\keywords{Del Pezzo surface; Inverse Galois Problem}
\begin{document}

\maketitle

\begin{abstract}
	A del Pezzo surface of degree one defined over the rationals has 240 exceptional curves. These curves are permuted by the action of the absolute Galois group. We show how a solution to the classical inverse Galois problem for a subgroup of the Weyl group of type $D_8$ gives rise to a solution of the inverse Galois problem for the action of this subgroup on the 240 exceptional curves. A del Pezzo surface of degree one with such a Galois action contains a Galois invariant sublattice of type $D_8$ within its Picard lattice; this can be characterized in terms of a certain set of sixteen points in $\mathbb{P}^4$.
\end{abstract}

%%% SECTION
\section{Introduction}

	If $\wtilde X_2$ is a del Pezzo surface of degree one, then it is the double cover of a quadric cone $X_2$ branched over a nonsingular genus four curve $X$ and the vertex of the cone. The quadric cone $X_2$ containing $X$ identifies a vanishing even theta characteristic $\theta_0$ of $X$. 
	Let $\kappa$ denote the canonical class of $\wtilde X_2$. The intersection pairing on $\wtilde X_2$ gives $\Pic(\wtilde X_2)$ the structure of a lattice, which admits a decomposition of the form
	\[
		\Pic(\wtilde X_2) = \gen{\kappa} \oplus \Pic(\wtilde X_2)^\perp
	\]
	where $\Pic(\wtilde X_2)^\perp$ denotes the sublattice of divisor classes orthogonal to the canonical class under the intersection pairing. The sublattice $\Pic(\wtilde X_2)^\perp$ is a root lattice of type $E_8$, and such a lattice has $135$ sublattices $\Lambda_{D_8} \subset \Pic(\wtilde X_2)^\perp$ of type $D_8$. In this article, we consider those del Pezzo surfaces of degree one whose Picard lattice has a Galois invariant sublattice of type $D_8$. The Galois action on $\Pic(\wtilde X_2)$ permutes the $240$ exceptional curves of $\wtilde X_2$ and acts on $\Pic(\wtilde X_2)^\perp$ though the Weyl group of type $E_8$, denoted $W_{E_8}$. As it turns out, the stabilizer in the Weyl group $W_{E_8}$ of a $D_8$-sublattice is of index $135$, and in fact it is isomorphic to the Weyl group $W_{D_8}$. Further details about del Pezzo surfaces can be found in \cite[Chapter~8]{Dol2012}. We show how to transmute the solution to the classical inverse Galois problem for subgroups of $\WD$ into a solution to the inverse Galois problem for del Pezzo surfaces of degree one with a Galois invariant sublattice $\Lambda_{D_8} \subset \Pic(\wtilde X_2)$ of type $D_8$.

	\begin{theorem} \label{thm: main theorem}
		Let $\rho\: \Gal(\mbq^\sep/\mbq) \rightarrow \WD$ be a continuous homomorphism. If $G := \Im(\rho)$ is the Galois group of some irreducible polynomial over $\mbq$, then there exists a nonsingular del Pezzo surface of degree one $\wtilde X_2$ such that each $\sigma \in \Gal(\mbq^\sep/\mbq)$ permutes the $240$ exceptional curves of $\wtilde X_2$ as described by $\rho(\sigma) \subset S_{240}$.
	\end{theorem}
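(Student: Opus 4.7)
The plan is to exploit the correspondence suggested by the paper's title: a del Pezzo surface of degree one equipped with a Galois-invariant sublattice $\Lambda_{D_8} \ssq \Pic(\wtilde X_2)^\perp$ is determined, up to finite data, by a configuration of sixteen $\WD$-marked points in $\mbp^4$. Once this correspondence is made precise and shown to be $\WD$-equivariant, the problem reduces to producing such a configuration with the prescribed Galois action.

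First I would assemble a parameter space $\mcM$ of marked sixteen-point configurations in $\mbp^4$ together with a $\WD$-equivariant construction sending a configuration to a del Pezzo surface of degree one with a chosen $D_8$-sublattice. The $\WD$-equivariance means that permuting the marks by $w \in \WD$ induces the corresponding action on $\Lambda_{D_8}$ and on the 240 exceptional curves via the inclusion $\WD \hookrightarrow W_{E_8}$. Establishing this correspondence, which is the geometric heart of the paper, presumably occupies the preceding sections.

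Next, given $\rho$ with image $G$, I would invoke the hypothesis that $G$ is the Galois group of some polynomial over $\mbq$ to fix a Galois extension $L/\mbq$ with $\Gal(L/\mbq) \iso G$ compatible with $\rho$. The standard Galois-descent machinery then produces a sixteen-point configuration in $\mbp^4$, defined over $\mbq$ as a closed subscheme, whose geometric points are permuted by $\Gal(\mbq^\sep/\mbq)$ exactly according to $\rho$: concretely, one twists a fixed $\mbq^\sep$-rational reference configuration by the cocycle class of $\rho$ and descends. This yields a $\mbq$-rational point of the quotient $\mcM/\WD$.

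The main obstacle will be ensuring that the resulting del Pezzo surface is nonsingular. The sixteen-point configurations giving singular or otherwise degenerate surfaces form a proper Zariski-closed subset of $\mcM$, so one must exhibit a Galois-compatible member avoiding it. I would handle this by producing a positive-dimensional family of Galois-compatible configurations over $\mbq$ and invoking a Hilbert-irreducibility or weak-approximation argument to find a smooth member. Once such a configuration is in hand, the $\WD$-equivariance of the construction, combined with the fact that $\WD$ is the full stabilizer of $\Lambda_{D_8}$ in $W_{E_8}$, immediately gives that the induced Galois action on the 240 exceptional curves agrees with $\rho$, completing the proof.
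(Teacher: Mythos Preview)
Your overall plan matches the paper's: establish a $\WD$-equivariant passage from sixteen-point configurations in $\mbp^4$ to del Pezzo surfaces of degree one with a marked $D_8$-sublattice, then produce a configuration with the prescribed Galois action and verify that a generic member is nonsingular.

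The execution differs in the tools. The paper does not twist an abstract reference configuration and descend; it writes the configuration down explicitly. From $\rho$ one extracts a degree-$16$ \'etale algebra $L$ with distinguished degree-$8$ subalgebra $K$ (Construction~\ref{cons: etale algebra of degree 16}), chooses a primitive $\alpha \in K$ with $L \iso K[x]/(x^2-\alpha)$, and writes down sixteen points in $\mbp^4$ whose coordinates are explicit polynomials in the $\pm\sqrt{\alpha_j}$. This replaces your descent step and makes the existence of $\mbq$-rational configurations with the correct Galois action immediate rather than something to be argued. For nonsingularity, the paper does not invoke Hilbert irreducibility or weak approximation; neither is quite the right tool, since the Galois action is already pinned down and one only needs a rational point avoiding a proper closed subset. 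Instead it shows that the characteristic polynomials of primitive elements in a fixed square class $\alpha K^{\timestwo}$ are Zariski dense in the space $\Poleight$ of monic degree-$8$ polynomials (Lemma~\ref{lem: minimal polynomials are dense}, Corollary~\ref{cor: sqrt primitive basis}), and then exhibits one explicit polynomial over $\mathbb{F}_{101}$ whose associated genus-$4$ curve is nonsingular (Example~\ref{ex: main example 1}), forcing the bad locus to be proper. Your outline would work in principle, but the paper's route is more constructive and sidesteps the need to verify that the twisted parameter space carries enough rational points.
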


	In particular, the subfield of $\mbq^\sep$ fixed by $\rho^{-1}(\id)$ is the splitting field of the $240$ exceptional curves. The group $\WD$ admits a transitive permutation representation on a set of size $16$ (see Section~\ref{sec: background}). If $\rho'\: \WD \rightarrow S_{16}$ is the associated morphism and if $G \subseteq \WD$ is a subgroup for which a solution to the inverse Galois problem is known, then there exists a (possibly reducible) polynomial $f$ of degree $16$ such that $G \iso \Gal(f)$ and each $\sigma \in G$ permutes the roots of $f$ as described by $\rho'(\sigma)$. A solution to the inverse Galois problem is known for several subgroups of $\WD$, including $\WD$ itself.

	\begin{theorem} \label{thm: inverse Galois polynomials}
		Let $G$ be a group which acts transitively on a set of size at most $16$. Then there is an irreducible polynomial $f$ over $\mbq$ such that $\Gal(f) \iso G$.
	\end{theorem}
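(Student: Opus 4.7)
The plan is to reduce this to known results on the inverse Galois problem in small degree. A transitive action of $G$ on a set of size $n \leq 16$ faithfully (since we may quotient by the kernel, but actually the statement gives $G$ with its action, so we get $G \hookrightarrow S_n$ via the action, up to replacing $G$ by its image) realizes $G$ as a transitive subgroup of $S_n$. Conversely, an irreducible polynomial $f \in \mbq[x]$ of degree $n$ has a Galois group that embeds as a transitive subgroup of $S_n$ via its action on the roots. Thus it suffices to show that every transitive subgroup of $S_n$, for $n \leq 16$, arises as $\Gal(f/\mbq)$ for some irreducible $f \in \mbq[x]$ of degree $n$.

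The transitive subgroups of $S_n$ for $n \leq 31$ have been classified (Conway--Hulpke--McKay and subsequent refinements), and for each such group in degree $n \leq 16$ an explicit polynomial over $\mbq$ realizing it as Galois group (with the prescribed transitive action on roots) has been exhibited. I would invoke the Klüners--Malle database of number fields, which contains, for every transitive subgroup $H \subseteq S_n$ in this range, an irreducible polynomial $f_H \in \mbz[x]$ of degree $n$ such that $\Gal(f_H/\mbq) \iso H$ as a permutation group. Hence the proof reduces to a lookup.

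The main step is therefore organizational rather than conceptual: one passes from the abstract group $G$ with its transitive action to its image in $S_n$, then selects the corresponding polynomial from the tables. Irreducibility of $f$ over $\mbq$ is equivalent to transitivity of $\Gal(f/\mbq)$ on the roots, which is exactly what the hypothesis gives.

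The main obstacle, such as it is, lies in citing the inverse Galois results in the degree range closest to $16$, where the literature is less uniform than for small $n$. For $n \leq 15$ the result is classical and covered by older tables; for $n = 16$ one has to appeal to the more recent computations (notably those of Klüners--Malle) to cover all $1954$ transitive subgroups of $S_{16}$. Once these references are assembled, no further argument is required.
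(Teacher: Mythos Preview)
Your proposal is correct and follows the same approach as the paper: the paper's proof consists of a single sentence citing the Kl\"uners--Malle database of number fields, exactly as you suggest. Your write-up is in fact more detailed than the paper's, which simply points to the database without further commentary.
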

	
	\begin{proof}
		A list of examples can be obtained from the online database of Kl\"uners and Malle \cites{KlunersMalleDatabaseArticle, KlunersMalleDatabase2021}.
	\end{proof}

	Our techniques are related to those in the article by Elsenhans and Jahnel~\cite{ElsenhansJahnel2019}, where they study plane quartics in terms of a set of eight points in $\mbp^3$ called a Cayley octad.

	In \cite{Corn2007}, Corn showed that the Brauer group of a del Pezzo surface is one of a finite number of explicit possibilities. Specifically, for a del Pezzo surface $X$ over $\mbq$ there is a canonical isomorphism $\Br X / \Br \mbq \iso \H^1(\Gal(\mbq^\sep/\mbq), \Pic X^\sep)$ coming from the Hochschild-Serre spectral sequence. Corn's result is obtained by considering the action of an abstract group on the Picard lattice and using group cohomology to determine the list of possibilities. Corn's result raises an interesting question, \emph{which Brauer groups from Corn's list actually occur for a del Pezzo surface defined over $\mbq$?}
	Theorem~\ref{thm: main theorem} allows us to resolve this question for the groups in Corn's list \cite[Theorem~4.1]{Corn2007} of exponent $1$, $2$, or $4$; we list these in Table~\ref{tbl: Corn list}.

	\begin{table}
		\label{tbl: Corn list}
		\begin{center}
		\begin{tabular}{rl} 
			Degree & Groups of exponent $1,2,4$ \\ \hline \\[-1em]
			$5 \leq d \leq 9$ & $\{\id\}$ \\
			$d=4$ & any of the groups above, $\mbz/2\mbz, (\mbz/2\mbz)^2$ \\
			$d=3$ & any of the groups above \\
			$d=2$ & any of the groups above, $(\mbz/2\mbz)^s \ \ (3 \leq s \leq 6)$, $\mbz/4\mbz \oplus (\mbz/2\mbz)^t \ \ (0 \leq t \leq 2)$, $(\mbz/4\mbz)^2$ \\
			$d=1$ & any of the groups above, $(\mbz/2\mbz)^s \ \ (7 \leq s \leq 8)$, $\mbz/4\mbz \oplus (\mbz/2\mbz)^t \ \ (0 \leq t \leq 4)$, \\ & $(\mbz/4\mbz)^2 \oplus (\mbz/2\mbz)^t \ \ (1 \leq t \leq 2)$ \\
		\end{tabular}
		\bigskip
		\caption{The list of possible $\Br X/\Br \mbq$ for del Pezzo surfaces by degree.}	
		\end{center}
	\end{table}

	\begin{corollary}
		Any group from row $d$ in Table~\ref{tbl: Corn list} is of the form $\Br X/\Br \mbq$ for some del Pezzo surface $X/\mbq$ of degree~$d$.
	\end{corollary}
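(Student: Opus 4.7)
The plan is to realize each Brauer quotient $G$ in row $d$ of Table~\ref{tbl: Corn list} as $\Br Y/\Br\mbq$ for a suitable del Pezzo surface $Y/\mbq$ of degree $d$. For $d \geq 5$ the only group to realize is $\{\id\}$, which is achieved by any smooth del Pezzo defined over $\mbq$ (e.g.\ $\mbp^2$, $\mbp^1 \times \mbp^1$, or a generic blow-up). For $d \in \{1,2,3,4\}$ the idea is to construct a del Pezzo surface $\wtilde X_2/\mbq$ of degree one with prescribed Galois action via Theorem~\ref{thm: main theorem} and then blow down $d-1$ pairwise disjoint Galois-fixed exceptional curves to obtain $Y$.

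\emph{Step 1 (selecting the subgroup).} For each group $G$ of exponent at most $4$ in row $d$, Corn's analysis \cite[Theorem~4.1]{Corn2007} exhibits a subgroup of $W_{E_{9-d}}$ whose first cohomology on the Picard lattice of a del Pezzo surface of degree $d$ equals $G$. Viewing a del Pezzo of degree $d$ as a blow-down of a del Pezzo of degree one at $d-1$ pairwise disjoint rational exceptional curves realizes $W_{E_{9-d}}$ as the pointwise stabilizer in $W_{E_8}$ of $d-1$ mutually disjoint exceptional classes $E_1,\dots,E_{d-1}$. I would then check, case-by-case across the groups of exponent at most $4$ in Corn's list, that the corresponding subgroup is conjugate (inside $W_{E_8}$) to a subgroup $H \subseteq W_{D_8}$ that still pointwise stabilizes the $E_i$. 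The restriction to exponent dividing $4$ is natural, because $W_{D_8}$ omits conjugacy classes of elements of orders $3$, $6$, and $12$ present in $W_{E_8}$, so groups with such cyclic factors in the Brauer quotient cannot arise from $W_{D_8}$-actions.

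\emph{Step 2 (geometric realization).} The subgroup $H \subseteq W_{D_8}$ inherits the standard permutation action of $W_{D_8}$ on $16$ points; in each case one verifies that $H$ has a faithful transitive permutation representation of degree at most $16$ (taking either an orbit on which $H$ acts faithfully, or the regular representation when $|H| \leq 16$). Theorem~\ref{thm: inverse Galois polynomials} then provides an irreducible polynomial $f$ over $\mbq$ with $\Gal(f) \iso H$, and Theorem~\ref{thm: main theorem} applied to the composition $\rho\: \Gal(\mbq^\sep/\mbq) \twoheadrightarrow H \hookrightarrow W_{D_8}$ yields a smooth del Pezzo surface $\wtilde X_2/\mbq$ of degree one on whose $240$ exceptional curves the Galois group acts through $H$. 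Since $H$ pointwise fixes $E_1,\dots,E_{d-1}$, these classes are represented on $\wtilde X_2$ by pairwise disjoint exceptional curves defined over $\mbq$, and we blow them down to obtain a del Pezzo surface $Y/\mbq$ of degree $d$.

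\emph{Step 3 (Brauer computation).} The blow-down gives a decomposition of Galois modules
\[ \Pic(\wtilde X_2^\sep) \iso \Pic(Y^\sep) \oplus \bigoplus_{i=1}^{d-1} \mbz E_i, \]
where the second summand carries trivial Galois action and hence contributes nothing to $\H^1$. Combined with the Hochschild--Serre identification $\Br Y/\Br\mbq \iso \H^1(\Gal(\mbq^\sep/\mbq), \Pic(Y^\sep))$ used by Corn, this yields $\Br Y/\Br\mbq \iso \H^1(\Gal(\mbq^\sep/\mbq), \Pic(\wtilde X_2^\sep)) \iso G$, completing the proof. The principal obstacle is Step 1: making explicit, for every entry of exponent at most $4$ in Corn's table, a subgroup of $W_{D_8}$ satisfying both the stabilizer condition and the cohomological identity. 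This is a finite, mechanical verification once Corn's subgroups are written out, but the bookkeeping, especially for $d \in \{1,2\}$ where the lists are longest, is the substantive part of the argument.
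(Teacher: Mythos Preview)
Your overall architecture matches the paper's: pick a subgroup $H\subseteq W_{D_8}$ with the correct $\H^1$, realize it via Theorem~\ref{thm: main theorem}, and for $d>1$ arrange that $H$ fixes $d-1$ pairwise orthogonal exceptional classes so one can blow down. Two points of divergence are worth noting.

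First, your aside that ``$W_{D_8}$ omits conjugacy classes of elements of orders $3$, $6$, and $12$'' is false: $W_{D_8}$ surjects onto $S_8$, hence contains elements of order $3$, $6$, $7$, $15$, etc. The restriction to Brauer groups of exponent $1$, $2$, $4$ in the statement is an input from Corn's classification, not a consequence of working inside $W_{D_8}$; in fact some subgroups of $W_{D_8}$ do produce $3$-torsion in $\H^1$. So this sentence should be deleted rather than used as motivation.

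Second, and more substantively, you enter Step~1 by taking Corn's specific subgroups of $W_{E_{9-d}}$ and asking whether each is conjugate into $W_{D_8}$; you then need, on top of that, a faithful transitive action of degree at most $16$ for each such $H$. Neither of these verifications is automatic, and you correctly flag Step~1 as the ``substantive part'' left undone. The paper sidesteps both issues by running the search in the opposite direction: it enumerates subgroups of $W_{D_8}$ directly (by computer), computes $\H^1(G,\Lambda_{E_8})$, and observes that for every target group $M$ in the table one can take $G$ of order at most $8$. This single bound simultaneously guarantees that the inverse Galois problem is solved for $G$ (via the regular representation and Theorem~\ref{thm: inverse Galois polynomials}) and removes any need to match up with Corn's particular subgroups. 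For $2\le d\le 4$ the paper likewise searches for $G$-actions admitting an orthogonal splitting $\Pic(\wtilde X_2)^\perp \cong \Lambda \oplus \langle e_1\rangle \oplus \cdots \oplus \langle e_{d-1}\rangle$ with trivial $G$-action on the $e_i$, which is exactly your Step~3 decomposition. So your outline is correct, but the paper's direct computational search (with the ``order at most $8$'' punchline) is what actually closes the argument.
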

	
	\begin{proof}
		First we consider the case where $d=1$. If $G$ is a subgroup of $\WD$ for which a solution to the inverse Galois problem is known, then the action of $G \subseteq \WD \subset W_{E_8}$ on the exceptional curves of a del Pezzo surface of degree one can be realized as a Galois action over $\mbq$. We can explicitly compute the group cohomology for the action of $G$ on the root lattice $\Lambda_{E_8}$ of type $E_8$ and check for the desired Brauer group. A script that does this for all of the subgroups of $\WD$ is available at the link in Subsection~\ref{sec: sub: software}. (Of course, Corn carries out a similar calculation to prove \cite[Theorem~4.1]{Corn2007}.) For each of the groups $M$ in Table~\ref{tbl: Corn list}, our computation shows there is a subgroup $G \subset \WD$ of order at most $8$ such that $M \iso \H^1(G, \Lambda_{E_8})$. In particular, a solution to the inverse Galois problem for such $G$ is known.
		
		For del Pezzo surfaces of degree $2 \leq d \leq 4$, we may apply the result of \cite{ElsenhansJahnel2019}. Alternatively, we search for a $G$-action on $\Pic(\wtilde X_2)^\perp$ such that there is an isomorphism of $G$-lattices
		\[
			\Pic(\wtilde X_2)^\perp \iso \Lambda \oplus \gen{e_1} \oplus \cdots \oplus \gen{e_{d-1}}
		\]
		where $e_j^2 = -1$, the summands are orthogonal, and where $\H^1(G, \Lambda)$ is as desired. A del Pezzo surface of degree one with such a Picard lattice has a set of $d-1$ pairwise orthogonal exceptional curves defined over $\mbq$; these can be blown-down to obtain a del Pezzo surface of degree $d$ with Brauer group $\H^1(G, \Lambda)$.
	\end{proof}

	\subsection{Software} \label{sec: sub: software}
		The computations for this paper were performed using the \texttt{magma} computer algebra system \cite{Magma}. The scripts are available at
		\begin{center}
			\url{https://github.com/a-kulkarn/SixteenPointsScripts.git}
		\end{center}
		The total computation time to run all of the scripts is approximately $1$ hour on commercial hardware (processor: AMD Ryzen$^{\text{tm}}$ Threadripper$^{\text{tm}}$ 2970WX).
				
		% Doob: AMD Opteron$^{\text{tm}}$ 6380 processor
		% Toby: AMD Ryzen Threadripper 2970WX 24-Core Processor

	%%% SUBSECTION
	\subsection*{Acknowledgements}
		I would like to thank Eran Assaf for comments on the early drafts of this article.
		The author has been supported by the Simons Collaboration on Arithmetic Geometry, Number Theory, and Computation (Simons Foundation grant 550033).

%%% SECTION
\section{Background} \label{sec: background}

\begin{comment}
\begin{corollary}
	Let $L/\mbq$ be a number field whose Galois group is a subgroup of $\WD$ and let $K$ be the normal $S_8$-subextension. Then there exists a normal basis $\alpha_1, \ldots, \alpha_8$ of $K$ such that $\mbq(\alpha_1, \ldots, \alpha_8)$ generate the $S_8$ subextension $K$ and $L := K(\sqrt{\alpha_1}, \ldots, \sqrt{\alpha_8})$.
\end{corollary}

\begin{proof}
	Kummer theory.
\end{proof}
\end{comment}

Let $K/\mbq$ be an \'etale algebra, and let $K = \prod_{j} K_j$ be a decomposition into simple factors. If $K$ is simple, then the \defi{norm} of an element $\alpha \in K$ is the usual field norm $N_{K/\mbq}$. Otherwise, we define the norm of $\alpha$ by $N_{K/\mbq}(\alpha) := \prod_j N_{K_j/\mbq}(\alpha_j)$.

\subsection{\'Etale algebras with a $\WD$ action}

	The (abstract) group $\WD$ is isomorphic to an index $2$ subgroup of the wreath product $S_2 \wr S_8$. Specifically, it is the extension of $S_8$ by the subgroup of elements of $\mu_2^8$ whose entries multiply to $1$. We have the exact sequence
	\[
		\xym{
			0 \ar[r] & \mu_2^7 \ar[r] & \WD \ar[r] & S_8 \ar[r] & 1 
		}.
	\]
	Essentially by definition, the wreath product $S_2 \wr S_8$ admits an action on a set of $16$ elements $\Omega$ (the eight elements on which $S_8$ acts endowed with signs). 
	
	\begin{construction} \label{cons: etale algebra of degree 16}
	Given $\rho\: \Gal(\mbq^\sep/\mbq) \rightarrow \WD$ a continuous homomorphism with image $G$, we describe how to construct an \'etale algebra $L$ of degree $16$ such that $\Hom(L, \mbq^\sep)$ is isomorphic to $\Omega$ as a $G$-set. We may choose a set of orbit representatives $\beta_1, \ldots, \beta_r$ such that $\Omega = G\cdot \beta_1 \cup \cdots \cup G \cdot \beta_r$. Functoriality between \'etale algebras and continuous Galois actions on a finite set allows us to identify an \'etale algebra $\widehat L$ from $\rho$ \cite[Chapter 8]{milneFT}. Functoriality also allows us to define an \'etale subalgebra $L$ of degree $16$ as the subalgebra element-wise fixed by $\Stab(G; \beta_1, \ldots, \beta_r)$.
	\end{construction}
	
	The $16$ elements of $\Omega$ are identified with the $16$ homomorphisms $L \rightarrow \mbq^\sep$ via the $\Hom$ functor. The natural quotient of $G$-sets $\Omega \rightarrow (\Omega/{\pm})$ induces an inclusion of \'etale algebras $K \inj L$, where the degree of $K$ is equal to $8$. Up to isomorphism we may write $K = \prod_j K_j$ and
	\begin{equation} \label{eq: L presentation}
			L \iso \prod_{j} K_j[x]/(x^2 - \alpha_j) \tag{$\dagger$}
	\end{equation}
	for some finite field extensions $K_j/\mbq$ and some $\alpha_j \in K_j$ with the property that $\prod_j N_{K_j/\mbq}(\alpha_j) \in \mbq^\timestwo$. We call $K$ the \defi{distinguished subalgebra}.

%%% SUBSECTION		
\subsection{Zariski density of generators}

\newcommand{\Kscheme}{\mba^{\!K}}
\newcommand{\KschemeA}{\mba^{\!A}}
\newcommand{\Poleight}{\mathrm{Pol}^8}
\newcommand{\sq}{\mathrm{sq}}

We denote by $\mathrm{Pol}^8$ the scheme whose set of $R$-valued points is the set of monic polynomials of degree $8$ with coefficients in $R$, where $R$ is a commutative ring containing $\mbq$. Of course, $\mba^8_\mbq \iso \Poleight$. 
If $K$ is an \'etale $\mbq$-algebra of dimension $8$, we may endow $K$ with the structure of an affine scheme, specifically the affine space $\mba^8_\mbq$. We denote this scheme by $\Kscheme$. The functor of points is given by $\Kscheme\!(R) = K \otimes_\mbq R$. 
%
%To be precise about the functor of points, if $\gamma_1, \ldots, \gamma_8$ is a fixed basis for $K/\mbq$, then we define the $R$-valued points by
%\[
%	\Kscheme\!(R) = \{r_1 \gamma_1 + \ldots + r_8 \gamma_8 : r_1, \ldots, r_8 \in R\}.
%\]
%for any affine scheme $\Spec R$ over $\mbq$.
%
%Note that the set of points is independent of the choice of basis and by definition, $\Kscheme\!(\mbq) = K$ as a set. 
The scheme $\Kscheme$ is simply the Weil restriction $\mathrm{Res}_{K/\mbq} \mba^1_K$. If $R$ is a commutative ring containing $\mbq$, then we denote by $\Kscheme_R$ the scheme $\Kscheme \times_{\Spec \mbq} \Spec R$.
If $\beta \in \Kscheme\!(R)$, we denote by $[\beta]$ the endomorphism of $R \otimes_\mbq K$ defined by multiplication-by-$\beta$. There is a canonical morphism of schemes
\[
	\mapdef{\chi^K}{\Kscheme}{\Poleight}{\beta}{\mathrm{coeffs}(\det(xI - [\beta]))}.
\]
%Note that $\chi^K$ is independent of the choice of basis $\gamma_1, \ldots, \gamma_8$. 
As an example, if $K = \prod_{j=1}^8 \mbq$ is the split \'etale $\mbq$-algebra of dimension $8$, then $\chi^K$ is simply the map defined by the elementary symmetric functions.

%We denote by $\Kscheme$ the functor parametrizing the characteristic polynomials of elements of $K$. Note that for a dense subfunctor, the characteristic polynomials agree with the minimal polynomials.

\begin{lemma} \label{lem: minimal polynomials are dense}
	The set $\chi^K(\Kscheme\!(\mbq))$ is Zariski dense in $\Poleight$.
\end{lemma}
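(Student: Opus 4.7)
The plan is to reduce the claim to two elementary ingredients: (a) $\Kscheme(\mbq)$ is itself Zariski dense in $\Kscheme$, and (b) $\chi^K$ is a dominant morphism of $\mbq$-schemes. Together these force the image $\chi^K(\Kscheme(\mbq))$ to be Zariski dense in $\Poleight$.

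Ingredient (a) is immediate: $\Kscheme \iso \mba^8_\mbq$, and since $\mbq$ is an infinite field, $\mbq^8$ is Zariski dense in $\mba^8_\mbq$. For ingredient (b), I would argue by base change. Because $K$ is étale of dimension $8$ over $\mbq$, there is an isomorphism $K \otimes_\mbq \bar{\mbq} \iso \bar{\mbq}^8$ of $\bar{\mbq}$-algebras. Under this identification, $\chi^K_{\bar{\mbq}} \colon \mba^8_{\bar{\mbq}} \to \mba^8_{\bar{\mbq}}$ is, up to a linear change of coordinates on the source (absorbing the transition from a $\mbq$-basis of $K$ to the idempotent basis of $\bar{\mbq}^8$), precisely the classical symmetric function map $(\beta_1, \ldots, \beta_8) \mapsto \text{coefficients of } \prod_{i=1}^{8}(x-\beta_i)$ — exactly the split example flagged just before the lemma. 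This map is surjective on $\bar{\mbq}$-points, since every monic degree-$8$ polynomial splits over $\bar{\mbq}$, so $\chi^K_{\bar{\mbq}}$ is dominant. Faithful flatness of $\bar{\mbq}/\mbq$ then shows that $\chi^K$ itself is dominant (e.g.\ by noting that the kernel of $(\chi^K)^\ast$ is stable under the $\bar{\mbq}$-base change of rings).

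To conclude, let $V \subseteq \Poleight$ be the Zariski closure of $\chi^K(\Kscheme(\mbq))$. The preimage $(\chi^K)^{-1}(V)$ is a closed subscheme of $\Kscheme$ that contains $\Kscheme(\mbq)$, so by (a) it equals all of $\Kscheme$. Hence $\chi^K(\Kscheme) \subseteq V$, and by (b) the Zariski closure of $\chi^K(\Kscheme)$ is all of $\Poleight$, forcing $V = \Poleight$. The only nontrivial step is (b); everything else is formal once one is willing to check dominance after base change to $\bar{\mbq}$, where the map visibly coincides with the elementary symmetric function map.
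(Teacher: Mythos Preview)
Your proof is correct and follows essentially the same approach as the paper: both establish density of $\Kscheme(\mbq)$ in affine space, base change to a field over which $K$ splits (you use $\bar{\mbq}$, the paper uses a splitting field $\widehat K$), identify $\chi^K$ after base change with the elementary symmetric function map, and invoke its surjectivity/dominance. The only differences are cosmetic: you spell out the final closure argument more explicitly, and you phrase step (b) as dominance rather than surjectivity, but either suffices.
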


\begin{proof}
	Let ${\widehat K}$ be a splitting field for $K$. Since $\Kscheme$ is just affine space, $\Kscheme\!(\mbq)$ is dense in $\Kscheme_{\widehat K}$. On the other hand, if $A := {\mbq}^8$ is the split \'etale ${\widehat K}$ algebra, diagonalization gives an isomorphism $\psi\: \Kscheme_{\widehat K} \rightarrow \KschemeA_{\widehat K}$. Diagonalization does not change the characteristic polynomial, so the diagram
	\[
		\xym{
			\Kscheme_{\widehat K} \ar[d]_-{\chi^K_{\widehat K}} \ar[r]^\psi & \KschemeA_{\widehat K} \ar[dl]^-{\chi^A_{\widehat K}} \\
			\Poleight_{\widehat K}
		}
	\]
	of morphisms over ${\widehat K}$ commutes. But $\chi^A$ is just the morphism defined by elementary symmetric functions, so it is surjective as a morphism of schemes, and thus so too is $\chi^K$.
\end{proof}

\begin{corollary} \label{cor: sqrt primitive basis}
	Given an \'etale algebra $L$ arising from Construction~\ref{cons: etale algebra of degree 16}, with distinguished subalgebra $K$, the set
	\[
		\left\{(\alpha_1, \ldots, \alpha_r) \in \prod_j K_j : L \iso \prod_j {K_j[x]}/{\gen{x^2-\alpha_j}}, \ K_j \iso \mbq(\alpha_j)\right\}
	\]
	has a dense image under $\chi^K$ in $\Poleight$. 
\end{corollary}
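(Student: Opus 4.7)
The plan is to describe the set in question as the image, under a dominant morphism of $\mbq$-schemes, of the $\mbq$-points of a nonempty open subscheme, and then invoke the same mechanism as in Lemma~\ref{lem: minimal polynomials are dense}. Fix a presentation of $L$ as in~\eqref{eq: L presentation}, with distinguished elements $\alpha_1, \ldots, \alpha_r$. A tuple $(\beta_1, \ldots, \beta_r) \in \prod_j K_j$ satisfies $L \iso \prod_j K_j[x]/\gen{x^2 - \beta_j}$ if and only if $\beta_j \equiv \alpha_j \pmod{(K_j^\times)^2}$ for each $j$, i.e., $\beta_j = \alpha_j \gamma_j^2$ for some $\gamma_j \in K_j^\times$. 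Writing $\mathcal{K}_j := \mba^{K_j}$, I would therefore introduce the morphism
\[
	\phi\colon \prod_j \mathcal{K}_j \to \Poleight, \quad (\gamma_1, \ldots, \gamma_r) \mapsto \chi^K\!\bra{\alpha_1 \gamma_1^2, \ldots, \alpha_r \gamma_r^2},
\]
and the open subscheme $V \subset \prod_j \mathcal{K}_j$ cut out by the Zariski-open conditions $\gamma_j \neq 0$ and $\mbq(\alpha_j \gamma_j^2) = K_j$ (the latter being the non-vanishing of the determinant that expresses $1, \alpha_j \gamma_j^2, \ldots, (\alpha_j \gamma_j^2)^{n_j-1}$ in a fixed $\mbq$-basis of $K_j$, where $n_j = [K_j:\mbq]$). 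The set in the corollary is then exactly $\phi(V(\mbq))$.

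Next, I would verify that $V$ is nonempty and that $\phi$ is dominant. For nonemptiness: each $\mathcal{K}_j$ is an irreducible affine space, and since $\alpha_j \neq 0$ by \'etaleness of $L$, the morphism $\gamma \mapsto \alpha_j \gamma^2$ on $\mathcal{K}_j$ is dominant; the preimage of the proper closed non-primitive locus is therefore proper closed, and intersecting with $\gamma_j \neq 0$ still leaves a nonempty open. For dominance of $\phi$: I would adapt the diagonalization argument from Lemma~\ref{lem: minimal polynomials are dense}. Over a splitting field $\widehat K$ of $K$, after diagonalizing $K \otimes \widehat K \iso \widehat K^{8}$, the morphism $\phi_{\widehat K}$ factors as componentwise squaring, then scaling each of the $8$ coordinates by the corresponding (nonzero) conjugate of some $\alpha_j$, then the elementary symmetric functions morphism; each factor is a surjective morphism of affine $\widehat K$-schemes.

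Combining these ingredients, the conclusion follows by the same strategy as in Lemma~\ref{lem: minimal polynomials are dense}. Since $V$ is a nonempty open subscheme of the irreducible affine space $\prod_j \mathcal{K}_j$, the $\mbq$-points $V(\mbq)$ are Zariski dense in $\prod_j \mathcal{K}_j$. For any nonempty open $U \subset \Poleight$, dominance of $\phi$ gives that $\phi^{-1}(U)$ is a nonempty open, so by irreducibility $V \cap \phi^{-1}(U)$ is a nonempty open, which then contains a $\mbq$-point whose image under $\phi$ lies in $U$. Hence $\phi(V(\mbq))$ meets every nonempty open, i.e., it is Zariski dense in $\Poleight$. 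The only step requiring genuine new content is the dominance of $\phi$, but this is essentially the computation already carried out in Lemma~\ref{lem: minimal polynomials are dense} composed with the (dominant) componentwise squaring, so I anticipate only bookkeeping rather than a substantive obstacle.
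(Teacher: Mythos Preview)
Your proposal is correct and follows essentially the same approach as the paper: the paper's two-line proof simply observes that $\alpha \cdot K^{\times 2}$ is Zariski dense in $\Kscheme$, that it contains a dense subset of primitive elements, and then implicitly invokes the dominance of $\chi^K$ established in Lemma~\ref{lem: minimal polynomials are dense}. You have unpacked exactly this argument, packaging the composite $\gamma \mapsto \chi^K(\alpha\gamma^2)$ as a single morphism $\phi$ rather than factoring through $\Kscheme$, but the content is identical.
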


\begin{proof}
	Choose a presentation for $L$ as in Equation~\eqref{eq: L presentation}. Since the set $\alpha \cdot K^\timestwo$ is Zariski dense in $\Kscheme$ for any unit $\alpha \in K^\times$, and contains a dense subset of primitive elements, the result follows.	
\end{proof}

%%% Subsection
\subsection{Tensors}

We denote by $\Sym_2 \mbq^{m+1}$ the space of $(m+1) \times (m+1)$ symmetric matrices over $\mbq$ and by $\Sym^2 \mbq^{n+1}$ the space of quadratic forms over $\mbq$ in $n+1$ variables.
Let $\mathcal{A} \in \mbq^{n+1} \otimes \Sym_2 \mbq^{m+1}$ be a tensor, symmetric in the last two entries. We view $\mathcal{A}$ as an $(n+1) \times (m+1) \times (m+1)$ array of elements of $\mbq$. We may think of such an array as being an ordered collection $A_0, \ldots, A_n$ of symmetric $(m+1) \times (m+1)$ matrices, the \emph{slices} of $\mathcal{A}$. We denote the contraction of $\mathcal{A}$ along a vector $v \in \mbq^{n+1}$ by $\mathcal{A}(v, \cdot, \cdot)$. More generally, we will contract along an element of $R^n$, where $R$ is a $\mbq$-algebra. Similarly, we will denote the contraction by an element $y \in R^{m+1}$ by $\mathcal{A}(\cdot, y, \cdot)$ or $\mathcal{A}(\cdot, \cdot, y)$, depending along which axis we contract. 

Denote $\mbp^{n} := \Proj \mbq[x_0,\ldots,x_n]$, $\mbp^m := \Proj \mbq[y_0, \ldots, y_m]$,  and denote the dual projective space of $\mbp^n$ by $\widehat \mbp^n$. We will also denote $\mathbf{x} := (x_0, \ldots, x_n)$ and $\mathbf{y} := (y_0, \ldots, y_m)$. 
%The contractions $\mathcal{A}(\mathbf{x}, \cdot, \cdot), \mathcal{A}(\cdot, \mathbf{y}, \cdot)$ are matrices of linear forms. 
If $x \in \mbq^{n+1}$, the contraction $\mathcal{A}(x, \mathbf{y}, \mathbf{y})$ is the quadratic form
\[
	\mathbf{y}^T\left(x_0A_0 + \ldots + x_nA_n \right)\mathbf{y}.
\]
If $x \in \mbp^n(\mbq)$ is a point, then the contraction
	$
		\mathcal{A}(x, \cdot, \cdot)
	$
is a symmetric $(m+1) \times (m+1)$ matrix with entries in $\mbq$, well-defined up to scaling. 
%Such a symmetric matrix is associated to the defining equation for a quadric in $\mbp^m$. 
Write $(q_0, \ldots, q_n)$ for the quadrics defined by the $n+1$ slices of $\mathcal{A}$. The $q_i$ define the rational map
	\[
		\begin{tabu}{cccc}
			\psi\: & \mbp^m &\dashrightarrow & \dual \mbp^n \\
			& y & \mapsto & \mathcal{A}(\cdot, y, y) = (q_0(y), \ldots, q_n(y))
		\end{tabu}.
	\]

%%% SECTION
\section{Proof of the main result}

\subsection{Construction of $16$ points in $\mbp^4$}

Given a hyperelliptic genus $3$ curve $Y$ of the form
\[
	Y\: y^2 = f(z_0, z_1)
\]
such that $f(z_0, z_1)$ is homogeneous, square-free, and $f(0,1), f(1,0) \in \mbq^\timestwo$, we describe a method to construct $16$ points in $\mbp^4$ based on the construction in \cite[Section 6]{KulkarniVemulapalli}. 

Define $K := \mbq[z]/(f(z,1))$ and $L := \mbq[z]/(f(z^2,1))$. An elementary calculation shows that the conditions on $f(z_0, z_1)$ allow us to write
\[
	f(z_0, z_1) = b(z_0, z_1)^2 - z_0z_1^3c(z_0,z_1) = -
	\det 
	\begin{bmatrix}
		z_0 z_1^3 & b(z_0, z_1) \\
		b(z_0, z_1) & c(z_0, z_1)
	\end{bmatrix}
\]
for some homogeneous polynomials $b(z_0, z_1), c(z_0, z_1)$ of degree $4$. If $\alpha_1, \ldots, \alpha_8$ are the roots of $f(z, 1)$, then the set
\[
	\mathfrak{C} := \left\{(\pm\sqrt{\alpha_j} : (\pm\sqrt{\alpha_j})^{-1} \cdot b(\alpha_j, 1) : \alpha_j^2 : \alpha_j : 1) \ : \ 1 \leq j \leq 8\right\}
\]
of points in $\mbp^4$ is split over $\widehat L$, the Galois closure of $L/\mbq$. One can check that when the $\alpha_j$ are distinct, the Vandermonde matrix of degree $2$ forms evaluated at the points of $\mathfrak{C}$ has corank $5$, so $\mathfrak{C}$ is contained in the intersection of $4$ quadrics. Generically, $\mathfrak{C}$ is a complete intersection of $4$ quadrics in $\mbp^4$. If $L$ is presented as in Equation~\eqref{eq: L presentation} such that $\alpha_1, \ldots, \alpha_r$ are primitive elements of $K_1, \ldots, K_r$ (respectively), then we have that
%the collection of all roots $\pm \sqrt{\alpha_j}$ with $1 \leq j \leq 8$ defines a subset of the $\mbq$-homomorphisms $\sigma\: L \rightarrow \widehat L$ which we can identify with $\Omega$.
$\{\pm \sqrt{\alpha_j} : 1 \leq j \leq 8\} = \Hom_\mbq(L, \mbq^\sep)$. Thus, the permutation action of $\Gal(\mbq^\sep/\mbq)$ on the $16$ points of $\mathfrak{C}$ is identical to the action of $\Gal(\mbq^\sep/\mbq)$ on $\Omega$.

We can explicitly describe the linear space of quadrics containing $\mathfrak{C}$. Write $\mbp^4 := \Proj(\mbq[y_0, y_1, y_2, y_3, y_4])$, which admits a natural projection to $\mbp^2 := \Proj(\mbq[y_2, y_3, y_4])$. Under the Veronese embedding 
	\[
		\mapdef{\nu_2}{\mbp^1}{\Proj(\mbq[y_2, y_3, y_4])}{(z_0:z_1)}{(z_0^2 : z_0z_1 : z_1^2)}
	\]
the quartic forms $z_0z_1^3, b(z_0, z_1), c(z_0, z_1)$ can be identified with the three quadratic forms $y_3y_4$, $b(y_2, y_3, y_4)$, $c(y_2, y_3, y_4)$; the identified quadric forms are unique modulo $y_3^2 - y_2y_4$. The determinantal representation for $f(z_0, z_1)$ becomes a quadratic determinantal representation, i.e., is a tensor in $\Sym^2 \mbq^3 \otimes \Sym_2 \mbq^2$. 

\begin{construction} \label{cons: tensor Recillas}
	Let $\mbp^2 := \Proj(k[y_2, y_3, y_4])$. Given a conic $C \subset \mbp^2$ and a tensor $\mathcal{B} \in \Sym^2 k^3 \otimes \Sym_2 k^2$ representing a $2 \times 2$ matrix of quadratic forms, we can construct a tensor $\mathcal{A} \in k^4 \otimes (\Sym_2 k^2 \oplus \Sym_2 k^3)$ as follows: Writing $\mathcal{B}(\mathbf{y}, \cdot) = \begin{bmatrix} a & b \\ b & c \end{bmatrix}$, we have that
	\begin{align*}
		a(\mathbf{y}) = \sum_{2 \leq i,j \leq 4} a_{ij} y_iy_j, \quad
		b(\mathbf{y}) = \sum_{2 \leq i,j \leq 4} b_{ij} y_iy_j, \quad
		c(\mathbf{y}) = \sum_{2 \leq i,j \leq 4} c_{ij} y_iy_j.
	\end{align*}
	We may write the defining equation for $C$ as $\sum_{2 \leq i,j \leq 4} d_{ij} y_iy_j$. (The terms with $i \neq j$ appear twice.) Define the slices of $\mathcal{A}$ to be
	\begin{alignat*}{3}
		&A_0 := 
		\begin{bmatrix}
			-1 & 0 & 0 & 0 & 0\\
			0 & 0 & 0 & 0 & 0 \\
			0 & 0 & a_{22} & a_{23} & a_{24} \\
			 0 & 0 & a_{32} & a_{33} & a_{34} \\
			 0 & 0 & a_{42} & a_{43} & a_{44} \\
		\end{bmatrix},
		\quad 
		&&A_1 := 
		\begin{bmatrix}
			0 & -\frac{1}{2} & 0 & 0 & 0 \\
			-\frac{1}{2} & 0 & 0 & 0 & 0 \\
			0 & 0 & b_{22} & b_{23} & b_{24} \\
			 0 & 0 & b_{32} & b_{33} & b_{34} \\
			 0 & 0 & b_{42} & b_{43} & b_{44} \\
		\end{bmatrix},
		\\
		&A_2 := 
		\begin{bmatrix}
			0 & 0 & 0 & 0 & 0 \\
			0 & -1 & 0 & 0 & 0 \\
			0 & 0 & c_{22} & c_{23} & c_{24} \\
			 0 & 0 & c_{32} & c_{33} & c_{34} \\
			 0 & 0 & c_{42} & c_{43} & c_{44} \\
		\end{bmatrix},
		\quad
		&&A_3 :=
		\begin{bmatrix}
			0 & 0 & 0 & 0 & 0 \\
			0 & 0 & 0 & 0 & 0 \\
			0 & 0 & d_{22} & d_{23} & d_{24} \\
			 0 & 0 & d_{32} & d_{33} & d_{34} \\
			 0 & 0 & d_{42} & d_{43} & d_{44} \\
		\end{bmatrix}.			
	\end{alignat*}
	The tensor $\mathcal{A}$ generically defines a genus $4$ curve via an intersection of the two symmetric determinantal varieties
	$X_2 := Z(4x_0x_2 - x_1^2)$  and 
	$X_3 := Z\left(\det \mathcal{A}^{(2)}(\mathbf{x}, \cdot, \cdot) \right)$,
	where
	\[
		\mathcal{A}^{(2)}(\mathbf{x}, \cdot, \cdot) := 
			x_0 
			\begin{bmatrix}
				a_{22} & a_{23} & a_{24} \\
				 a_{32} & a_{33} & a_{34} \\
				 a_{42} & a_{43} & a_{44} \\
			\end{bmatrix}
			+ x_1
			\begin{bmatrix}
				b_{22} & b_{23} & b_{24} \\
				 b_{32} & b_{33} & b_{34} \\
				 b_{42} & b_{43} & b_{44} \\
			\end{bmatrix}
			+ x_2
			\begin{bmatrix}
				c_{22} & c_{23} & c_{24} \\
				 c_{32} & c_{33} & c_{34} \\
				 c_{42} & c_{43} & c_{44} \\
			\end{bmatrix}
			+ x_3
			\begin{bmatrix}
				d_{22} & d_{23} & d_{24} \\
				 d_{32} & d_{33} & d_{34} \\
				 d_{42} & d_{43} & d_{44} \\
			\end{bmatrix}.
	\]
	From the four symmetric matrices $A_0, \ldots, A_3$ we obtain an intersection of four quadrics in $\mbp^4$.
\end{construction}

\begin{comment}
\begin{example} \label{ex: main example 1}
	Let
	\[
		f(z_0, z_1) := 4z_0^8 + z_0^7z_1 + 67z_0^6z_1^2 + 63z_0^5z_1^3 + 58z_0^4z_1^4 + 100z_0^3z_1^5 + 32z_0^2z_1^6 + z_1^8 \in \mathbb{F}_{101}[z_0, z_1].
	\]
	Then the tensor given by the construction above, represented as a matrix of linear forms, is
	\[
	\left(\begin{array}{rrrrr}
	x_0 & 51x_1 & 0 & 0 & 0 \\
	51x_1 & x_2 & 0 & 0 & 0 \\
	0 & 0 & 100x_1 + 96x_2 & 0 & 51x_3 \\
	0 & 0 & 0 & 85x_1 + 100x_2 + 100x_3 & 50x_0 + 100x_1 + 50x_2 \\
	0 & 0 & 51x_3 & 50x_0 + 100x_1 + 50x_2 & 99x_1 + 94x_2
	\end{array}\right)
	\]
	and the resulting genus $4$ curve in $\mbp^3$ is defined by the common vanishing locus of
	\begin{align*}
	q(x) &:= x_0x_2 + 25x_1^2, \\
	g(x) &:= 76x_0^2x1 + 77x_0^2x_2 + x_0x_1^2 + 56x_0x_1x_2 + 53x_0x_2^2 + 70x_1^3 + 35x_1^2x_2 \\
	& \quad + 99x_1^2x_3 + 9x_1x_2^2 + 84x_1x_2x_3 + 4x_1x_3^2 + 42x_2^3 + 66x_2^2x_3 + 76x_2x_3^2 + 76x_3^3.
	\end{align*}
	Computer algebra can be used to verify that the genus $4$ curve is nonsingular. (See subsection~\ref{sec: sub: software}.)
\end{example}
\end{comment}

\begin{example} \label{ex: main example 1}
	Let
	\[
		f(z_0, z_1) := (z_0^2 - z_1^2)(z_0^2 - 4z_1^2)(z_0^2 - 9z_1^2)(z_0^2 - 16z_1^2) \in \mathbb{F}_{101}[z_0, z_1].
	\]
	Choosing
	\[
		b(z_0, z_1) := z_0^4 + 86z_0^2z_1^2 + 24z_1^4 \sim y_2^2 + 86y_3^2 + 24y_4^2, \qquad c(z_0, z_1) := -z_0z_1^3 \sim -y_3y_4,
	\]
	we have that $f(z_0, z_1) = b(z_0,z_1)^2 - z_0z_1^3 c(z_0,z_1)$. 
	The tensor given by Construction~\ref{cons: tensor Recillas}, represented as a matrix of linear forms, is
	\[
		\left(\begin{array}{rrrrr}
			-x_0 & -51x_1 & 0 & 0 & 0 \\
			-51x_1 & -x_2 & 0 & 0 & 0 \\
			0 & 0 & x_1 & 0 &  -51x_3 \\
			0 & 0 & 0 & 86x_1 + x_3 & 51(x_0-x_2) \\
			0 & 0 & -51x_3 & 51(x_0-x_2) & 24x_1
		\end{array}\right)
	\]
	and the resulting genus $4$ curve in $\mbp^3$ is defined by the common vanishing locus of
	\begin{align*}
	q(x) &:= x_0x_2 - x_1^2, \\
	g(x) &:= 25x_0^2x_1 + 51x_0x_1x_2 + 44x_1^3 + 24x_1^2x_3 + 25x_1x_2^2 + 29x_1x_3^2 + 25x_3^3.
	\end{align*}
	Computer algebra can be used to verify that the genus $4$ curve is nonsingular. (See subsection~\ref{sec: sub: software}.)
\end{example}

\begin{lemma} \label{lem: data for nonsingular genus 4 curve}
	Let $L/\mbq$ be an \'etale algebra arising from Construction~\ref{cons: etale algebra of degree 16}, let $K$ be the distinguished subalgebra, and let
	\[
		U := \left\{(\alpha_1, \ldots, \alpha_r) \in \prod_j K_j : L \iso \prod_j {K_j[x]}/{\gen{x^2-\alpha_j}}, \ K_j \iso \mbq(\alpha_j)\right\}.
	\]
	Let $V \subseteq U$ be the subset such that for any $\alpha \in V$, Construction~\ref{cons: tensor Recillas} applied to the characteristic polynomial of $\alpha$ produces a nonsingular genus $4$ curve and a base locus $\mathfrak{C}$ of dimension $0$ and degree $16$ of the linear space of quadrics. Then $V$ is non-empty.
\end{lemma}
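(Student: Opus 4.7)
The plan is to realize $V$ as the preimage under $\chi^K$ of a non-empty Zariski open subset of $\mathrm{Pol}^8$, and then conclude via the density statement of Corollary~\ref{cor: sqrt primitive basis}.

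First I would observe that Construction~\ref{cons: tensor Recillas} applied to an element $\alpha \in U$ depends only on the characteristic polynomial $f := \chi^K(\alpha)$ (the decomposition $f = b^2 - z_0 z_1^3 c$ is determined by $f$ up to the ambiguity already noted in the text, and the final tensor is well-defined modulo that ambiguity). The two requirements defining $V$, namely nonsingularity of the resulting genus $4$ curve and zero-dimensionality of the degree $16$ base locus $\mathfrak{C}$, are Zariski open conditions on $f$: nonsingularity of a complete intersection is the non-vanishing of a suitable discriminant, and a complete intersection of four quadrics in $\mbp^4$ is zero-dimensional exactly when a certain resultant-type polynomial is non-zero (in which case its degree is automatically $16 = 2^4$ by B\'ezout). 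Let $W \subseteq \mathrm{Pol}^8$ be the resulting open subset, so that $V = (\chi^K)^{-1}(W) \cap U$.

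Second, I would verify that $W$ is non-empty. The inequalities defining $W$ have integer coefficients, so they carve out an open subscheme $W_\mbz \subseteq \mathrm{Pol}^8_\mbz$. Example~\ref{ex: main example 1} exhibits a point of $W(\mbf_{101})$: the cited computation checks nonsingularity of the genus $4$ curve, and the zero-dimensionality of $\mathfrak{C}$ in this example is verifiable with the scripts of Subsection~\ref{sec: sub: software}. Since the discriminant-type polynomials cutting out the closed complement do not vanish identically on $\mathrm{Pol}^8_\mbz$, their non-vanishing loci remain non-empty after base change to $\mbq$, so $W_\mbq$ is a non-empty Zariski open subset of $\mathbb{A}^8_\mbq$. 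Corollary~\ref{cor: sqrt primitive basis} then guarantees that $\chi^K(U) \cap W(\mbq) \neq \emptyset$, and any preimage in $U$ of such a point lies in $V$.

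The main obstacle is the non-emptiness of $W$; after that the density argument is formal. Producing a single mod-$p$ or rational witness, as in Example~\ref{ex: main example 1}, is the cleanest route. An alternative would be to compute the relevant discriminants symbolically in terms of generic degree-$4$ polynomials $b$ and $c$ and check non-triviality, but exhibiting a concrete example is considerably simpler.
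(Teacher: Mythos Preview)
Your proposal is correct and follows essentially the same approach as the paper: identify the good locus as the preimage of a Zariski open subset of $\mathrm{Pol}^8$, use Example~\ref{ex: main example 1} to witness non-emptiness, and conclude via the density statement of Corollary~\ref{cor: sqrt primitive basis}. You are more explicit than the paper about why a witness over $\mbf_{101}$ forces the open locus to be non-empty over $\mbq$ (via integrality of the defining inequalities), and you spell out the openness of both the nonsingularity and the zero-dimensionality conditions, whereas the paper's two-line proof leaves these points implicit.
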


\begin{proof}
	Example~\ref{ex: main example 1} shows that there is a non-empty open subscheme of $\Poleight$ where the constructed curve is nonsingular. The result follows from Corollary~\ref{cor: sqrt primitive basis}.
\end{proof}

%%% SUBSECTION %%%
\subsection{Cycles on double covers}

In \cite{Reid1972quadrics}, Reid showed that the algebraic cycles on the complete intersection of 3 quadrics correspond to algebraic cycles within the Prym variety of the natural double cover of the degeneracy locus. An alternative reference for this construction is \cite{Tyurin1975}. In this section, we show how a similar construction allows us to identify pairs of points on the intersection of $4$ quadrics in $\mbp^4$ with the $112$ exceptional curves on a del Pezzo surface of degree one.

The following lemma contains some results we will freely use about quadrics.

\begin{lemma} \label{lem: properties of quadrics}
	Let $k$ be a field of characteristic not equal to $2$, let $A \in \Sym_2 k^{m+1}$ be of rank $r>0$, and let $Q := Z(\mathbf{y}^TA\mathbf{y}) \subset \mbp_k^m$ be the associated quadric. Then:
	\begin{enumerate}[(a)]
		\item
		Any singular quadric is a cone over a non-singular quadric.
		\item
		The singular locus of $Q$ is contained in every maximal isotropic subspace. In other words, every maximal isotropic subspace is a cone over an isotropic subspace of the nonsingular part. Furthermore, the singular locus of $Q$ is the linear subspace $\mbp(\ker A) \subset \mbp^m$.	
		\item
		Over $\bar{k}$, the dimension of a maximal isotropic subspace is $\floor{\frac{r}{2}} + \dim(\ker A) - 1$.
		\item
		If $r$ is even, there are two distinct families of maximal isotropic subspaces of $Q$. If $r$ is odd, then there is a unique family of maximal isotropic subspaces.
		\item
		If $r$ is even, and $V,W$ are maximal, then $V,W$ are contained in the same family if and only if $\dim(V) - \dim (V \cap W) \equiv 0 \pmod 2$.
	\end{enumerate}
\end{lemma}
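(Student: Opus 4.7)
The plan is to handle parts (a) and (b) by elementary linear algebra over $k$, and then reduce parts (c), (d), (e) to the case of a nonsingular quadric over the algebraic closure $\bar k$, where they become standard facts about the orthogonal Grassmannian.

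For (a) and (b): Since $\mathrm{char}(k) \neq 2$, polarization identifies symmetric bilinear forms with quadratic forms, and Lagrange's diagonalization yields $P \in \GL_{m+1}(k)$ with $P^T A P = \mathrm{diag}(\lambda_1, \ldots, \lambda_r, 0, \ldots, 0)$ and $\lambda_i \neq 0$. In these coordinates the defining equation of $Q$ involves only $y_0, \ldots, y_{r-1}$, exhibiting $Q$ as a cone with vertex $\mbp(\ker A)$ over the nonsingular quadric $Q_0 \subset \mbp^{r-1}$ cut out by the first $r$ variables; this gives (a). For (b), the Jacobian criterion identifies $\Sing(Q)$ with $\{y : Ay = 0\} = \mbp(\ker A)$. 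If $V$ is a maximal isotropic subspace and $s \in \ker A$, then $s^T A s = 0$ and $s^T A v = 0$ for every $v \in V$, so $V + \langle s\rangle$ is isotropic and maximality forces $s \in V$.

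For (c): by (b), a maximal isotropic of $Q$ is precisely the projective cone with vertex $\mbp(\ker A)$ over a maximal isotropic of the nonsingular quadric $Q_0$ of rank $r$. Base-changing to $\bar k$ and decomposing the nondegenerate form into hyperbolic planes plus (if $r$ is odd) one anisotropic line, one reads off that a Lagrangian has $k$-vector dimension $\lfloor r/2\rfloor$. Adding back the kernel, a maximal isotropic of $Q$ has vector dimension $\lfloor r/2\rfloor + \dim\ker A$, i.e., projective dimension $\lfloor r/2\rfloor + \dim\ker A - 1$.

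For (d) and (e): after the reduction via (a) it suffices to treat the nonsingular case over $\bar k$. Here I would invoke the classical description of the orthogonal Grassmannian $\mathrm{OG}(k,r)$ as in Harris, \emph{Algebraic Geometry: A First Course}, Lecture~22: in odd rank $r = 2k+1$ the group $\mathrm{SO}_r$ acts transitively on $\mathrm{OG}(k, r)$, which is irreducible, so there is a unique family; in even rank $r = 2k$ the scheme $\mathrm{OG}(k,r)$ has two connected components interchanged by any element of $\mathrm{O}_r\smallsetminus \mathrm{SO}_r$. For (e), one checks that the function $V \mapsto \dim V - \dim(V \cap W)$ (mod $2$) is continuous hence locally constant on $\mathrm{OG}(k,r)$, and that a hyperplane reflection exchanges the two components while changing this parity by $1$; this identifies the two families with the two parity classes.

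The only real obstacle is the bookkeeping in part (c), keeping straight whether $\dim$ refers to vector or projective dimension (the stated formula treats $\dim(\ker A)$ as a vector-space dimension), and justifying the reduction to the nonsingular case over $\bar k$ in (d), (e); the latter is harmless because (b) guarantees that maximal isotropics of $Q$ and $Q_0$ are in dimension-shifting bijection that is compatible with the family decomposition.
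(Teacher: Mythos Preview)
Your proposal is correct. The paper's own proof is simply the one-line citation ``See \cite{GriffithsHarris1994}'', so your sketch is in fact more detailed than what the paper provides: you supply direct arguments for (a)--(c) and defer only (d)--(e) to a standard reference (Harris's \emph{Algebraic Geometry}), whereas the paper defers the entire lemma to Griffiths--Harris.
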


\begin{proof}
	See \cite{GriffithsHarris1994}.
\end{proof}

If $\mathcal{L}$ is a maximal isotropic subspace of a quadric $Q$, we will denote by $[\mathcal{L}]$ the family of maximal isotropic subspaces containing $\mathcal{L}$. 

\begin{proposition} \label{prop: defining the generator cover}
	Let $S_r^m$ be the space of quadrics of even rank $r$. Let $x \in S_r^m$ and $[\mcL]$ denote a family of maximal isotropic subspaces of $x$. Then the choice of generator $(x,[\mcL]) \mapsto x$ defines a nontrivial double cover branched over $S_{r-1}^m$. 
\end{proposition}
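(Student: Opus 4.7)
The plan is to first establish the cover at the level of sets via Lemma~\ref{lem: properties of quadrics}, then endow it with an algebraic structure through an incidence variety and its Stein factorization, and finally verify nontriviality by a local monodromy computation near the branch locus.

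The fiber count is immediate from Lemma~\ref{lem: properties of quadrics}(d): a quadric in $S_r^m$ has two families of maximal isotropic subspaces, while a quadric in $S_{r-1}^m$ has only one. Hence $(x,[\mcL]) \mapsto x$ is $2$-to-$1$ on $S_r^m$ and $1$-to-$1$ on $S_{r-1}^m$, yielding the claimed set-theoretic double cover branched along $S_{r-1}^m$. To upgrade this to an algebraic structure I would introduce the incidence variety
\[
I := \{(x,V) \in (S_r^m \cup S_{r-1}^m) \times \Gr(d+1, m+1) : V \text{ is a maximal isotropic subspace of } x\},
\]
with $d = \floor{r/2} + \dim(\ker) - 1$ from Lemma~\ref{lem: properties of quadrics}(c), and project to the first factor. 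The fiber over each $x$ is an orthogonal Grassmannian which by Lemma~\ref{lem: properties of quadrics}(d) has two connected components over $S_r^m$ and one over $S_{r-1}^m$. Taking the Stein factorization of this projection then produces a finite morphism whose fibers are exactly the families of maximal isotropics, i.e., the desired double cover.

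The main obstacle is nontriviality, which I would establish via a monodromy calculation. Stripping off the kernel by Lemma~\ref{lem: properties of quadrics}(a), a transverse one-parameter deformation at a generic point of $S_{r-1}^m$ takes the normal form
\[
Q_t : y_0^2 + y_1^2 + \cdots + y_{r-2}^2 + t\, y_{r-1}^2 = 0,
\]
with $Q_t \in S_r^m$ for $t \neq 0$ and $Q_0 \in S_{r-1}^m$. For $t$ in a small punctured disk around $0$, the substitution $y_{r-1}' := \sqrt{t}\, y_{r-1}$ identifies $Q_t$ with the fixed quadric $Q: y_0^2 + \cdots + y_{r-1}^2 = 0$ and carries the two families of $Q_t$ to the two families of $Q$. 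Traversing the loop $t = e^{i\theta}$, $\theta \in [0,2\pi]$, changes the branch of $\sqrt{t}$ by a sign, so the induced monodromy on families of $Q$ is given by the reflection $y_{r-1} \mapsto -y_{r-1}$, an element of $\mathrm{O}(r) \setminus \mathrm{SO}(r)$. This reflection exchanges the two families; to see this directly from the paper, one can choose an explicit maximal isotropic $V$ of $Q$ and verify via Lemma~\ref{lem: properties of quadrics}(e) that $\dim V - \dim(V \cap V') \equiv 1 \pmod 2$ for $V'$ the image of $V$ under the reflection. Hence the monodromy around $S_{r-1}^m$ is nontrivial and the double cover does not split.
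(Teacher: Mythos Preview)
Your argument is essentially correct, and in fact does more than the paper: the paper's proof of this proposition is simply a citation to \cite[Section 5]{Tyurin1975}, so you have supplied a self-contained argument where the paper defers to the literature.

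A couple of minor points are worth tightening. First, in your incidence variety you write $d = \floor{r/2} + \dim(\ker) - 1$, but $\dim(\ker)$ jumps by one as you pass from $S_r^m$ to $S_{r-1}^m$. You should remark that the two effects cancel: for rank $r$ one gets $r/2 + (m+1-r) - 1 = m - r/2$, while for rank $r-1$ (odd, since $r$ is even) one gets $(r-2)/2 + (m+2-r) - 1 = m - r/2$, so the Grassmannian $\Gr(d+1,m+1)$ is genuinely constant over the base and the incidence variety is well-posed. Second, your monodromy computation is correct but the appeal to Lemma~\ref{lem: properties of quadrics}(e) deserves one explicit line: in isotropic coordinates $u_jv_j$ the reflection $y_{r-1}\mapsto -y_{r-1}$ swaps $u_{r/2}\leftrightarrow v_{r/2}$, so a maximal isotropic $V=\{u_1=\cdots=u_{r/2}=0\}$ is sent to $V'=\{u_1=\cdots=u_{r/2-1}=v_{r/2}=0\}$ with $\dim V - \dim(V\cap V')=1$, confirming the families are exchanged. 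With these clarifications your proof stands on its own and matches what one finds in Tyurin.
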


\begin{proof}
	See \cite[Section 5]{Tyurin1975}.
\end{proof}

\begin{definition}
	The \emph{generator double cover} is the morphism $\gencover\: (x,[\mcL]) \mapsto x$ given by Proposition~\ref{prop: defining the generator cover}.
\end{definition}

The generator double cover allows us to identify the secants between $16$ points in $\mbp^4$ with exceptional curves of a del Pezzo surface of degree $1$. In our particular case of a block-diagonal tensor $\mathcal{A} \in k^4 \otimes (\Sym_2 k^2 \oplus \Sym_2 k^3)$, the locus of quadrics of rank at most $4$ is $X_5 := Z(\det \mathcal{A}(\mathbf{x}, \cdot, \cdot))$, which is the union of the quadric cone $X_2$ where the first block degenerates and the symmetroid cubic $X_3$ where the second block degenerates. The generator double cover $\gencover\: \wtilde X_5 \rightarrow X_5$ is branched over the locus of quadrics of rank at most $3$, which consists of the genus four curve $X = X_2 \cap X_3$ as well as the singularities of $X_2$ and $X_3$. Generically, the web of quadrics defined by $\mathcal{A}$ does not contain any quadrics of rank $2$. 
We obtain by restriction a double cover $\gencover\: \wtilde X_2 \rightarrow X_2$ branched along a genus $4$ curve and the vertex of the cone; in other words, $\wtilde X_2$ is a del Pezzo surface of degree one.

Let $\mathfrak{C}$ be the intersection of all the quadrics in the web defined by $\mathcal{A}$, which we saw before is generically a complete intersection of four quadrics in $\mbp^4$. If $\ell$ is a secant of $\mathfrak{C}$ and $V_\ell \subset k^5$ is the affine cone over $\ell$, then we define a subscheme of $\wtilde X_2$ by
	\[
		\tau(\ell) := \left\{(x, [\mathcal{L}]) \in \wtilde X_2 : \mathcal{L} := V_\ell + \ker \mathcal{A}(x, \cdot, \cdot) \text{ is a maximal isotropic subspace of } \mathcal{A}(x, \mathbf{y}, \mathbf{y}) \right\}.
	\]
Observe that $\tau(\ell)$ is well-defined; the kernel of a quadric $Q$ of rank $4$ generically does not meet $V_\ell$, so the space $V_\ell + \ker Q$ is a maximal isotropic subspace of $Q$. Notice that the automorphism $\eta\: \mbp^4 \rightarrow \mbp^4$ defined by $\eta\: (y_0:y_1:y_2:y_3:y_4) \mapsto (-y_0:-y_1:y_2:y_3:y_4)$ acts invariantly on all of the quadratic forms in the web defined by $\mathcal{A}$. 

If $\mathcal{A}(x, \mathbf{y}, \mathbf{y})$ is a quadratic form of rank $4$ vanishing on $\ell$, then it must also vanish on $\eta(\ell)$. 
%It turns out that $\ell$ meets $\mbp(\ker \mathcal{A}(x, \cdot, \cdot))$ at a single point (see \cite[???]{KulkarniVemulapalli}).
%
%Furthermore, since $\ell$ and $\eta(\ell)$ do not (generically) intersect, the maximal isotropic subspaces $V_\ell + \ker \mathcal{A}(x, \cdot, \cdot)$ and $\eta(V_\ell) + \ker \mathcal{A}(x, \cdot, \cdot)$ intersect only along the kernel, and thus lie in opposite families of maximal isotropic subspaces. 
Furthermore, the intersection of the maximal isotropic subspaces $V_\ell + \ker \mathcal{A}(x, \cdot, \cdot)$ and $\eta(V_\ell) + \ker \mathcal{A}(x, \cdot, \cdot)$ is generically $\ker \mathcal{A}(x, \cdot, \cdot)$, and thus the two maximal isotropic subspaces lie in opposite families.
In other words, the curves $\tau(\ell), \tau(\eta \ell)$ on $\wtilde X_2$ only intersect along the branch locus and are exchanged by $\Aut(\wtilde X_2/X_2)$.

The last result we need is a theorem from \cite{KulkarniVemulapalli}. To clarify the statement of the theorem, any nonhyperelliptic genus $4$ curve $X$ with a vanishing even theta characteristic $\theta_0$ has a unique vanishing even theta characteristic. This allows us to partition the $2$-torsion classes of the Jacobian variety into one of two types:
\begin{itemize}
	\item (odd) $\epsilon \in \Jac(X)[2]$ is of the form $[\theta- \theta_0]$ for some odd theta characteristic of $X$.
	\item (even) $\epsilon \in \Jac(X)[2]$ is not even.
\end{itemize}
There are $120$ odd $2$-torsion classes and $135$ nontrivial even $2$-torsion classes.

\begin{theorem}[{\cite[Theorem~1.1.3]{KulkarniVemulapalli}}]
	\label{thm: genusfoursummary}
	Let $k$ be a field of characteristic not $2$ or $3$. Then:
	\begin{enumerate}[(a)]
	\item
	There is a canonical bijection between:
	\[
		\left \{  \begin{array}{c}
		k \text{-isomorphism classes of tuples } (X, \epsilon, \theta_0),\\
		\text{where } X \text{is a smooth genus $4$ curve with vanishing even theta} \\
		\text{characteristic } \theta_0 \text{ with a rational divisor class defined over } k, \\
		\text{and } \epsilon \text{ is a nontrivial even $2$-torsion class}
		\end{array}
		 \right \} 	 \longleftrightarrow \left \{ \begin{array}{c}
		 \text{nondegenerate orbit classes of } \\
		 k^4 \otimes \left(\Sym_2 k^2 \oplus \Sym_2 k^3 \right) \\
		 \text{under the action of } \\
		 \GL_4(k) \times \GL_2(k) \times \GL_3(k) 
		 \end{array} \right \}
	\]
	\end{enumerate}
	\noindent
	Let $\mathcal{A} \in k^4 \otimes (\Sym_2 k^2 \oplus \Sym_2 k^3)$ be a nondegenerate tensor and let $\theta_0$ and $\epsilon$ be the associated line bundles on $X$. Then:
	\begin{enumerate}[(b)]
	\item
	The images of the $120$ secants of $\mathfrak{C}$ under $\psi$ define $56 + 8$ tritangent planes of $X$. Viewing $X \cap H$ as a divisor of $X$, eight of these tritangents satisfy $X \cap H = 2D$ where $D \in |\theta_0|$. The other $56$ tritangents satisfy $X \cap H = 2D$, where $D$ is the effective representative of an odd theta characteristic of $X$.
	
	\item
	Let $e_2$ denote the Weil pairing on $\Jac(X)[2]$. Then the $56$ distinct odd theta characteristics constructed from the secants of $\mathfrak{C}$ are precisely the odd theta characteristics $\theta$ such that $e_2(\theta \otimes \theta_0^\vee, \epsilon) = 0$.
	\end{enumerate}

\end{theorem}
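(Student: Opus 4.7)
The theorem has three parts which I would prove in order, since (b) and (c) rely on the bijection established in (a). For (a), the plan is to construct mutually inverse maps. Construction~\ref{cons: tensor Recillas} already exhibits the tensor-to-curve direction at the level of underlying schemes: $X = X_2 \cap X_3$ is the genus $4$ curve, and the vanishing even theta characteristic $\theta_0$ is forced by the quadric cone $X_2$, whose ruling cuts $X$ in divisors of the class $\tfrac{1}{2} K_X$. The nontrivial even $2$-torsion class $\epsilon$ should come from the symmetric determinantal representation of the cubic $X_3$ via the classical Beauville--Catanese correspondence adapted to symmetric resolutions: such a representation determines a theta characteristic on $X$, and subtracting $\theta_0$ produces $\epsilon$. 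In the reverse direction, starting from $(X, \epsilon, \theta_0)$, I would use $h^0(\theta_0) = 1$ to recover the $2 \times 2$ block, apply Beauville's theorem to a suitable twist like $\theta_0 \otimes \epsilon$ to produce the $3 \times 3$ block, and finally verify equivariance under the $\GL_4(k) \times \GL_2(k) \times \GL_3(k)$ action.

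For (b), the strategy is geometric: a secant $\ell$ through two points $p_1, p_2 \in \mathfrak{C}$ cuts a hyperplane in the $4$-dimensional web of quadrics (the quadrics vanishing on $\ell$ form a $3$-dimensional subspace), so the image $\psi(\ell)$ is a plane $H \subset \dual \mbp^3$. A local analysis at $p_1$ and $p_2$ shows each contributes a tangency to $X \cap H$, giving $X \cap H = 2D$ for some effective degree-$3$ divisor and hence a tritangent plane. To count: the involution $\eta\: (y_0\!:\!y_1\!:\!y_2\!:\!y_3\!:\!y_4) \mapsto (-y_0\!:\!-y_1\!:\!y_2\!:\!y_3\!:\!y_4)$ acts on $\mathfrak{C}$ and trivially on the web, so $\ell$ and $\eta(\ell)$ produce the same tritangent plane. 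Of the $\binom{16}{2} = 120$ secants, the $8$ $\eta$-fixed ones (joining $p$ to $\eta(p)$) yield the $8$ tritangents with $D \in |\theta_0|$; the remaining $112$ non-fixed secants fall into $56$ $\eta$-orbits, producing the $56$ odd tritangents.

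For (c), I would identify the $2$-torsion class $\theta \otimes \theta_0^\vee$ arising from a secant $\ell$ with a Prym-theoretic class on the generator double cover $\wtilde X_2 \to X_2$, then show the Weil pairing with $\epsilon$ is computed as a mod-$2$ intersection number between the cycle $\tau(\ell)$ on $\wtilde X_2$ and a cycle canonically attached to $\epsilon$. The pairing should vanish precisely on the $56$ odd theta characteristics produced in (b), matching the dimension count that the kernel of $e_2(-, \epsilon)$ on the $120$-element set of odd classes has exactly $56$ elements. The main obstacle is part (a), specifically verifying that the $2$-torsion class extracted from the symmetric determinantal representation of $X_3$ is \emph{even} rather than odd; this parity is controlled by a Clifford/Arf-type invariant of the symmetric structure whose computation requires careful tracking of signs throughout the construction. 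Once (a) is in place, (b) reduces to a local tangency computation and (c) to the intersection-theoretic identification of the Weil pairing.
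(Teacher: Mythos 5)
You should be aware that the paper you are comparing against contains no proof of this statement: Theorem~\ref{thm: genusfoursummary} is quoted verbatim from \cite[Theorem~1.1.3]{KulkarniVemulapalli}, and the citation is the entirety of the paper's justification. So there is no in-paper argument for your proposal to match; what you have written is an outline of how such a result is established in the companion work (symmetric determinantal representations of the cubic symmetroid and theta characteristics, secants of $\mathfrak{C}$ giving tritangent planes, a Prym/Weil-pairing identification), and at that level it points in the right direction. Your numerology is also sound: for a fixed nontrivial even class $\epsilon$, exactly $56$ of the $120$ odd theta characteristics $\theta$ satisfy $e_2(\theta\otimes\theta_0^\vee,\epsilon)=0$, and the $\binom{16}{2}=120$ secants split under $\eta$ into $8$ fixed secants and $56$ orbits of size two, matching the $8+56$ tritangents and the type-$8$/type-$112$ dichotomy this paper exploits after quoting the theorem.

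As a proof, however, the proposal has genuine gaps. In (a) you only sketch both directions and explicitly leave open the decisive point that the $2$-torsion class extracted from the determinantal representation of $X_3$ is \emph{even}; without that parity statement, and without actually carrying out the inverse construction, checking that it inverts the forward map on nondegenerate orbits, and verifying $\GL_4(k)\times\GL_2(k)\times\GL_3(k)$-equivariance, part (a) is a program rather than a proof, and (b), (c) depend on it. In (b), note a small correction: $\psi$ contracts a secant $\ell$ to a single point of $\dual\mbp^3$ (for $y\in\ell$ off the base locus, the web-quadrics vanishing at $y$ are exactly those containing $\ell$), i.e.\ to a plane in the $\mbp^3$ containing $X$, not to ``a plane $H\subset\dual\mbp^3$''; the local tangency analysis producing $X\cap H=2D$, and the separate treatment of the $8$ secants $\{p,\eta(p)\}$ yielding $D\in|\theta_0|$, are still to be done. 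In (c) the central claim --- that $e_2(\theta\otimes\theta_0^\vee,\epsilon)$ is computed by an intersection number attached to $\tau(\ell)$ on the generator double cover --- is asserted, not proved, and it is essentially the content of the cited theorem. So the proposal is a plausible roadmap consistent with \cite{KulkarniVemulapalli}, but it does not constitute a proof of the statement, and the present paper offers none to compare it with.
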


\begin{comment}
	\label{thm: genusfoursummary}

	Let $\mathcal{A} \in k^4 \otimes (\Sym_2 k^2 \oplus \Sym_2 k^3)$ be a nondegenerate tensor and let $\theta_0$ and $\epsilon$ be the associated line bundles on $X$. Then:
	\begin{enumerate}[(a)]
	\item
	The images of the $120$ secants of $\mathfrak{C}$ under $\psi$ define $56 + 8$ tritangent planes of $X$. Viewing $X \cap H$ as a divisor of $X$, eight of these tritangents satisfy $X \cap H = 2D$ where $D \in |\theta_0|$. The other $56$ tritangents satisfy $X \cap H = 2D$, where $D'$ is the effective representative of an odd theta characteristic of $X$.
	
	\item
	Let $e_2$ denote the Weil pairing on $\Jac(X)[2]$. Then the $56$ distinct odd theta characteristics constructed from the secants of $\mathfrak{C}$ are precisely the odd theta characteristics $\theta$ such that $e_2(\theta \otimes \theta_0^\vee, \epsilon) = 0$.
	\end{enumerate}
\end{comment}

The inclusion $X \inj \wtilde X_2$ induces a restriction morphism of divisor classes. We denote by $\Pic(\wtilde X_2)^\perp$ the divisor classes orthogonal to the canonical class under the intersection pairing. There is an exact sequence
\[
	\xym{
		0 \ar[r] & 2\Pic(\wtilde X_2)^\perp \ar[r] & \Pic(\wtilde X_2)^\perp \ar[r] & \Pic(X)[2] \ar[r] & 0.
	}
\]
Additionally, the anti-canonical class of $\wtilde X_2$ restricts to the unique vanishing even theta characteristic $\theta_0$ of $X$. If $e \in \Pic(\wtilde X_2)$ is an exceptional curve and $\kappa$ is the canonical class, then $e + \kappa \in \Pic(\wtilde X_2)^\perp$, the restriction of $e$ to $X$ defines an odd theta characteristic, and the sublattice generated by
\[
	R_{D_8} := \{ e + \kappa \in \Pic(\wtilde X_2) : e^2 = -1, \ \ e_2((e + \kappa)|_{X}, \epsilon) = 0\}
\]
is a root lattice of type $D_8$; the set of roots of this lattice is precisely $R_{D_8}$.

\begin{proposition} \label{prop: Identification of $G$-sets}
	Let $\ell$ be a secant of $\mathfrak{C}$. Then the curve $\tau(\ell)$ is one of the $112$ exceptional curves of $\wtilde X_2$ with the property that
	\[
		e_2((\tau(\ell) + \kappa)|_{X}, \epsilon) = 0.
	\]
	Furthermore, each of the $112$ exceptional curves of this type is of the form $\tau(\ell)$ for some secant of $\mathfrak{C}$.
\end{proposition}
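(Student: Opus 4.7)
The plan is to realize $\tau(\ell)$ as the closure of a section of the generator double cover $\gencover\: \wtilde X_2 \to X_2$ above a conic on $X_2$, to verify the exceptional invariants via a divisor-class calculation, and to identify the restriction $\tau(\ell)|_X$ with the effective odd theta characteristic supplied by Theorem~\ref{thm: genusfoursummary}. Let $H \subset \mbp^3$ be the hyperplane parametrizing those $x$ for which $Q_x := \mathcal{A}(x, \mathbf{y}, \mathbf{y})$ vanishes on $\ell$, and set $C := H \cap X_2$, a conic. On the open subset $C^\circ \subseteq C$ where $Q_x$ has rank~$4$, the subspace $V_\ell + \ker Q_x$ is a $3$-dimensional maximal isotropic by Lemma~\ref{lem: properties of quadrics}(c), and $x \mapsto [V_\ell + \ker Q_x]$ defines a section $s_\ell$ of $\gencover$. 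At a rank-$3$ point $x_0 \in C \cap X$ the two families of generators merge by Lemma~\ref{lem: properties of quadrics}(d), so $s_\ell$ extends continuously through $x_0$; hence $\tau(\ell)$, the closure of the image of $s_\ell$, is an irreducible rational curve isomorphic to $C$.

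The morphism $\gencover$ is the bicanonical map of $\wtilde X_2$, so $\gencover^* \mcO_{X_2}(1) \iso \mcO_{\wtilde X_2}(-2\kappa)$. The two sections $\tau(\ell)$ and $\tau(\eta\ell)$ each map birationally to $C$ and together form $\gencover^{-1}(C)$, yielding $\tau(\ell) + \tau(\eta\ell) = -2\kappa$ in $\Pic(\wtilde X_2)$. Using $\tau(\eta\ell) = \sigma \cdot \tau(\ell)$ for $\sigma \in \Aut(\wtilde X_2/X_2)$ (recorded in the excerpt preceding the proposition), symmetry together with $\kappa^2 = 1$ forces $\tau(\ell) \cdot \kappa = -1$, and adjunction with $g(\tau(\ell)) = 0$ then yields $\tau(\ell)^2 = -1$. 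Thus $\tau(\ell)$ is an exceptional curve.

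Identify $R \subset \wtilde X_2$ with $X$ via $\gencover$. By Theorem~\ref{thm: genusfoursummary}(b), $C \cap X = H \cap X = 2D$ where $D$ is the effective representative of an odd theta characteristic $\theta$. In local coordinates on $X_2$ about a support point of $D$, we may take $X = \{u = 0\}$ and, by the tangency encoded in $C \cap X = 2D$, $C = \{u - v^2 u_0(v) = 0\}$ for a local unit $u_0$. Locally $\wtilde X_2$ is $\{t^2 = u\}$ with $R = \{t = 0\}$, and $\gencover^{-1}(C) = \{t^2 = v^2 u_0(v)\}$ is the union of the two smooth branches $t = \pm v \sqrt{u_0(v)}$, which are $\tau(\ell)$ and $\tau(\eta\ell)$, each meeting $R$ transversely. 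Hence $\tau(\ell)|_X = D = \theta$, and using $-\kappa|_X = \theta_0$ gives $(\tau(\ell) + \kappa)|_X = \theta - \theta_0$, so $e_2((\tau(\ell) + \kappa)|_X, \epsilon) = 0$ by Theorem~\ref{thm: genusfoursummary}(c).

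For surjectivity, any exceptional curve $e$ with $e_2((e + \kappa)|_X, \epsilon) = 0$ restricts to an odd theta characteristic $\theta = e|_X$ satisfying the Weil pairing condition, so $\theta$ is one of the $56$ characteristics appearing in Theorem~\ref{thm: genusfoursummary}(c). The only exceptional curves restricting to $\theta$ are $e$ and $\sigma(e) = -2\kappa - e$, which coincide with $\tau(\ell)$ and $\tau(\eta\ell)$ for any secant $\ell$ producing $\theta$. The $56$ pairs $\{\ell, \eta\ell\}$ of non-$\eta$-invariant secants thus account for all $112$ exceptional curves with the stated property. The principal obstacle is the local computation in the previous paragraph forcing $\tau(\ell)|_X = D$ (rather than $2D$), which relies on the tangency recorded in Theorem~\ref{thm: genusfoursummary}(b) and is what connects the geometry of the generator cover to the theta-characteristic language.
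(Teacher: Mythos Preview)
Your argument is correct and is precisely the detailed unpacking the paper's one-line proof (``follows immediately from Theorem~\ref{thm: genusfoursummary} and the discussion above'') is pointing to: you identify $\tau(\ell)$ as a section of $\gencover$ over the tritangent conic $H\cap X_2$, read off the numerical invariants from $\tau(\ell)+\tau(\eta\ell)\sim -2\kappa$ together with adjunction, and then use Theorem~\ref{thm: genusfoursummary}(b),(c) to pin down $\tau(\ell)|_X$ and the Weil-pairing condition.

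One small point worth flagging. Your argument tacitly treats the case $\eta(\ell)\neq \ell$, i.e.\ the $112$ ``type~112'' secants; this is also what the paper's preceding discussion assumes when it asserts that $V_\ell+\ker Q_x$ and $\eta(V_\ell)+\ker Q_x$ lie in opposite families. For the eight $\eta$-invariant secants one checks that the quadric $A_3$ (the Veronese conic in $y_2,y_3,y_4$) vanishes identically on $\ell$, so the hyperplane $H$ passes through the vertex of $X_2$ and $C=H\cap X_2$ degenerates; these secants are exactly the ones producing divisors in $|\theta_0|$ rather than odd theta characteristics in Theorem~\ref{thm: genusfoursummary}(b), and they do not contribute to the $112$ curves under discussion. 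Since both the paper's ``discussion above'' and the application immediately following the proposition are about the type~112 secants, your restriction is the intended one; it would be cleanest to say so explicitly at the outset.
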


\begin{proof}
	Follows immediately from Theorem~\ref{thm: genusfoursummary} and the discussion above.
\end{proof}

The automorphism $\eta$ restricts to an automorphism of $\mathfrak{C}$. A pair of geometric points of $\mathfrak{C}$ is either:
\begin{itemize}
	\item Type $8$: a pair of the form $\{p, \eta(p)\}$, or
	
	\item Type $112$: a pair not of the form $\{p, \eta(p)\}$.
\end{itemize}
We see that any $G$-action preserves the type of a pair $\{p,q\}$. In particular, 
Proposition~\ref{prop: Identification of $G$-sets} shows that the $G$-set of pairs of type $112$ is isomorphic (as a $G$-set) to the $112$ roots of the $D_8$ lattice identified in Proposition~\ref{prop: Identification of $G$-sets}. 

%%% SUBSECTION
\subsection{Proof of the main theorem}

Let $\rho\: \Gal(\mbq^\sep/\mbq) \rightarrow W_{D_8}$ be a continuous homomorphism with image $G$. By Construction~\ref{cons: etale algebra of degree 16}, we construct an \'etale algebra $L$ of degree $16$ on which Galois acts via $G$, as well as its distinguished subalgebra $K$ of degree $8$.

We may choose an element $(\alpha_1, \ldots, \alpha_r) \in V \subset K$ as in Lemma~\ref{lem: data for nonsingular genus 4 curve} and let $f$ be its characteristic polynomial over $\mbq$. Construction~\ref{cons: tensor Recillas} allows us to construct a locus of $16$ points with a $G$-action from $f$, as well as a nonsingular genus $4$ curve $X$ contained in a quadric cone $X_2$. The domain of the generator double cover $\gencover\: \wtilde X_2 \rightarrow X_2$ is a del Pezzo surface of degree one, and the Galois action on the $112$ roots of the $D_8$ sublattice from Proposition~\ref{prop: Identification of $G$-sets} has the Galois action prescribed by $\rho$.

%Theorem~\ref{thm: main theorem} is implied by the solution to the inverse Galois problem for $16$ points in $\mbp^4$ defined by a tensor $\mathcal{A} \in k^4 \otimes (\Sym_2 k^2 \oplus \Sym_2 k^3)$ whose associated genus four curve is smooth. However, we have solved this inverse Galois problem already, so we are done.

%%%%%%%%%%%%%%%%%%%%%%
% BIBLIOGRAPHY
%%%%%%%%%%%%%%%%%%%%%%

%% Kill the MR tag in the bibliography.	
\renewcommand{\MR}[1]{}

\begin{bibdiv}
	\begin{biblist}
		
		\bib{Magma}{article}{
		   author={Bosma, Wieb},
		   author={Cannon, John},
		   author={Playoust, Catherine},
		   title={The Magma algebra system. I. The user language},
		   note={Computational algebra and number theory (London, 1993)},
		   journal={J. Symbolic Comput.},
		   volume={24},
		   date={1997},
		   number={3-4},
		   pages={235--265},
		   issn={0747-7171},
		   review={\MR{1484478}},
		   doi={10.1006/jsco.1996.0125},
		}

		\bib{Corn2007}{article}{
		   author={Corn, Patrick},
		   title={The Brauer-Manin obstruction on del Pezzo surfaces of degree 2},
		   journal={Proc. Lond. Math. Soc. (3)},
		   volume={95},
		   date={2007},
		   number={3},
		   pages={735--777},
		   issn={0024-6115},
		   review={\MR{2368282}},
		   doi={10.1112/plms/pdm015},
		}
					
		\bib{Dol2012}{book}{
			author={Dolgachev, Igor V.},
			title={Classical algebraic geometry},
			note={A modern view},
			publisher={Cambridge University Press, Cambridge},
			date={2012},
			pages={xii+639},
			isbn={978-1-107-01765-8},
			review={\MR{2964027}},
			doi={10.1017/CBO9781139084437},
		}	

		\bib{ElsenhansJahnel2019}{article}{
		   author={Elsenhans, Andreas-Stephan},
		   author={Jahnel, J\"{o}rg},
		   title={On plane quartics with a Galois invariant Cayley octad},
		   journal={Eur. J. Math.},
		   volume={5},
		   date={2019},
		   number={4},
		   pages={1156--1172},
		   issn={2199-675X},
		   review={\MR{4015450}},
		   doi={10.1007/s40879-018-0292-3},
		}

		\bib{GriffithsHarris1994}{book}{
		   author={Griffiths, Phillip},
		   author={Harris, Joseph},
		   title={Principles of algebraic geometry},
		   series={Wiley Classics Library},
		   note={Reprint of the 1978 original},
		   publisher={John Wiley \& Sons, Inc., New York},
		   date={1994},
		   pages={xiv+813},
		   isbn={0-471-05059-8},
		   review={\MR{1288523}},
		   doi={10.1002/9781118032527},
		}
		
		\bib{KlunersMalleDatabaseArticle}{article}{
		   author={Kl\"{u}ners, J\"{u}rgen},
		   author={Malle, Gunter},
		   title={A database for field extensions of the rationals},
		   journal={LMS J. Comput. Math.},
		   volume={4},
		   date={2001},
		   pages={182--196},
		   review={\MR{1901356}},
		   doi={10.1112/S1461157000000851},
		}
		
		\bib{KlunersMalleDatabase2021}{article}{
		   author={Kl\"{u}ners, J\"{u}rgen},
		   author={Malle, Gunter},		
		   title={A Database for Number Fields},
		   date={2021},
		   note={online database},
		   eprint={http://galoisdb.math.upb.de/home}
		}

		\bib{KulkarniVemulapalli}{article}{
			author={Kulkarni, Avinash},
			author={Vemulapalli, Sameera},
			title={On Intersections of symmetric determinantal varieties and theta characteristics of canonical curves},
			date={2021}
			eprint={arXiv:2109.08740}
		}

		\bib{milneFT}{book}{
			author={Milne, James S.},
			title={Fields and Galois Theory (v5.00)},
			year={2021},
			note={Available at www.jmilne.org/math/},
			pages={142}
		}

		\bib{Reid1972quadrics}{thesis}{
			author = {Reid, Miles}, 
			title = {The complete intersection of two or more quadrics}, 
			note = {Ph.D. dissertation},
			publisher = {Cambridge University},
			year = {1972}
		}
						
		\bib{Tyurin1975}{article}{
			author = {Tyurin, A. N.},
			title = {On intersections of quadrics},
			year = {1975},
			journal = {Russian Mathematical Surveys},
			doi = {http://dx.doi.org/10.1070/RM1975v030n06ABEH001530},
			volume = {30},
			issue = {6}
		}

	\end{biblist}
\end{bibdiv}

\end{document}